\newcommand{\no}[1]{#1}
\renewcommand{\no}[1]{}
\renewcommand{\Delta}{\upDelta}}
\date{\today}
\newtheorem{theorem}{Theorem}[section]
\newtheorem{corollary}{Corollary}[section]
\theoremstyle{remark}
\newtheorem{remark}{Remark}[section]
\numberwithin{equation}{section}
\title[Determining the potential in a wave equation]{Determining the potential in a wave equation without a geometric condition. Extension to the heat equation}
\author[Ka\"{\i}s Ammari]{Ka\"{\i}s Ammari}
\address{UR Analysis and Control of Pde, UR 13ES64, Department of Mathematics, Faculty of Sciences of Monastir, University of Monastir, 5019 Monastir, Tunisia }
\email{kais.ammari@fsm.rnu.tn}
\author[Mourad Choulli]{Mourad Choulli}
\address{Institut \'Elie Cartan de Lorraine, UMR CNRS 7502, Universit\'e de Lorraine, Boulevard des Aiguillettes, BP 70239, 54506 Vandoeuvre les Nancy cedex - Ile du Saulcy, 57045 Metz cedex 01, France}
\email{mourad.choulli@univ-lorraine.fr}
\author[Faouzi Triki]{Faouzi Triki\dag}
\address{Laboratoire Jean Kuntzmann, UMR CNRS 5224, Universit\'e de Joseph Fourier, 38041 Grenoble Cedex 9,
France}
\email{Faouzi.Triki@imag.fr}
\date{}
\begin{document}

\begin{abstract}
We prove a logarithmic stability estimate for the inverse problem of determining the potential in a wave equation from boundary measurements obtained by varying the first component of the initial condition. The novelty of the present work is that no geometric condition is imposed to the sub-boundary where the measurements are made. Our results improve those obtained by the first and second authors in \cite{AC1}. We also show how the analysis for the wave equation can be adapted to  an inverse coefficient problem for the heat equation.

\end{abstract}

\subjclass[2010]{35R30}

\keywords{inverse problem, wave equation, potential, boundary measurements. \\ \dag FT is partially supported by Labex PERSYVAL-Lab (ANR-11-LABX-0025-01)}

\maketitle


\section{Introduction}

Let $\Omega$ be a $C^3$-smooth bounded domain of $\mathbb{R}^n$, $n\ge 2$, with boundary $\Gamma$ and consider the following initial-boundary value problem, abbreviated to IBVP in the sequel, for the wave equation:
\begin{equation}\label{1.1}
\left\{
\begin{array}{lll}
 \partial _t^2 u - \Delta u + q(x)u  = 0 \;\; &\mbox{in}\;   Q=\Omega \times (0,\tau), 
 \\
u = 0 &\mbox{on}\;  \Sigma =\Gamma \times (0,\tau), 
\\
u(\cdot ,0) = u_0,\; \partial_t u (\cdot ,0) = u_1.
\end{array}
\right.
\end{equation}

From here on
\[
E_0=H_0^1(\Omega )\oplus L^2(\Omega ).
\]

The unit ball of a Banach space $X$ will denoted in the sequel by $B_X$.

\smallskip
By \cite[Theorem A.3, page 493]{BCY}, for any $(u_0,u_1)\in E_0$ and $q\in L^\infty (\Omega )$, the IBVP \eqref{1.1} has a unique solution
 \[u:=\mathscr{S}_q^\tau (u_0,u_1)\in C([0,\tau ]; H_0^1(\Omega ))\] 
 so that 
 \[ \partial _t u\in C([0,\tau ]; L^2(\Omega ))\;\; \mathrm{and}\;\; \partial _\nu u\in L^2(\Sigma ).\] 
 Additionally, for any $m>0$, there exists a constant $C=C(m,\Omega )>0$ so that, for each $q\in mB_{L^\infty (\Omega )}$ and $(u_0,u_1)\in E_0$,
\[
\|\partial _\nu \mathscr{S}_q^\tau (u_0,u_1)\|_{L^2(\Sigma )}\leq C\|(u_0,u_1)\|_{E_0}.
\]

Fix $\Upsilon$ a non empty open subset of $\Gamma$ and set $\Lambda =\Upsilon \times (0,\tau )$. The inequality above says that the operator
\[
\mathscr{C}_q^\tau : (u_0,u_1)\in E_0\mapsto \partial _\nu \mathscr{S}_q^\tau (u_0,u_1)_{|\Lambda}\in L^2(\Lambda )
\] 
is bounded and
\begin{equation}\label{1.4}
\| \mathscr{C}_q^\tau\|_{\mathscr{B}(E_0,L^2(\Lambda ))}\le C,
\end{equation}
uniformly in $q\in mB_{L^\infty (\Omega )}$.

\smallskip
Define the operator $\widetilde{\mathscr{C}}_q^\tau $ by $\widetilde{\mathscr{C}}_q^\tau (u_0)=\mathscr{C}_q^\tau (u_0,0)$, $u_0\in H_0^1(\Omega )$. Clearly $\widetilde{\mathscr{C}}_q^\tau \in \mathscr{B}(H_0^1(\Omega ), L^2(\Lambda ))$ and 
\begin{equation}\label{1.5}
\| \widetilde{\mathscr{C}}_q^\tau \|_{\mathscr{B}(H_0^1(\Omega ),L^2(\Lambda ))}\le C,
\end{equation}
again uniformly in $q\in mB_{L^\infty (\Omega )}$.

\smallskip
Recall that the space $H_\Delta (\Omega )$ is given by
\[
H_\Delta (\Omega )=\{ \varphi \in L^2(\Omega );\; \Delta \varphi \in L^2(\Omega )\}.
\]
With reference to Poincar\'e's inequality,  $\mathcal{H}=H_0^1(\Omega )\cap H_\Delta (\Omega )$  is a Banach space for the norm
\[
\| \varphi \|_{\mathcal{H}}=\|\nabla \varphi \|_{L^2(\Omega )^n}+\|\Delta \varphi \|_{L^2(\Omega )}.
\]
When $u_0\in \mathcal{H}$ we easily see that $\partial _t\mathscr{S}_q^\tau (u_0,0)=\mathscr{S}_q^\tau (0,\Delta u_0-qu_0)$. Then proceeding similarly as before we conclude that $\widetilde{\mathscr{C}}_q^\tau $ restricted to $\mathcal{H}$, still denoted by $\widetilde{\mathscr{C}}_q^\tau $,  define a bounded operator from $\mathcal{H}$ into $H^1((0,\tau) ;L^2(\Upsilon ))$ and 
\begin{equation}\label{1.6}
\| \widetilde{\mathscr{C}}_q^\tau \|_{\mathscr{B}(\mathcal{H},H^1((0,\tau) ;L^2(\Upsilon ))}\le C,
\end{equation}
uniformly in $q\in mB_{L^\infty (\Omega )}$.

\smallskip
Let
\[
\Psi (\gamma )=|\ln \gamma |^{-\frac{1}{8+2n}} +\gamma ,\;\; \gamma >0,
\]
extended by continuity at $\gamma =0$ by setting $\Psi (0)=0$.

\smallskip
Let $m>0$ be fixed. In the rest of this text, unless otherwise stated, $C$, $c$, $\tau _0$ and $\mu$ denote generic constants that can depend only on $n$, $\Omega$, $\Upsilon$ and $m$.

\smallskip
We aim to prove the following theorem.

\begin{theorem}\label{theorem1.1}
There exist two constants $\tau_0 >0$ and $C>0$ so that, for any $\tau \geq \tau_0$, $q_0,q\in mB_{L^\infty (\Omega )}$ satisfying $q_0\geq 0$ and $q-q_0\in mB_{W^{1,\infty }(\Omega )}$,
\[
C\|q_0-q\|_{L^2(\Omega )}\le \Psi \left(\| \widetilde{\mathscr{C}}_q^\tau -\widetilde{\mathscr{C}}_{q_0}^\tau \|_{\mathscr{B}(\mathcal{H},H^1((0,\tau) ;L^2(\Upsilon ))}\right).
\]
\end{theorem}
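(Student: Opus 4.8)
The plan is to pass from the dynamical boundary operator $\widetilde{\mathscr{C}}_q^\tau$ to the boundary spectral data of the elliptic operator $A_q=-\Delta+q$ with Dirichlet conditions, and then to invoke a Borg--Levinson type logarithmic stability estimate for the potential in terms of that spectral data. Since the initial velocity is frozen at $u_1=0$, the solution admits the cosine representation $\mathscr{S}_q^\tau(u_0,0)(t)=\cos(t\sqrt{A_q})\,u_0$, so that, writing $(\lambda_k,\phi_k)$ for the Dirichlet eigenpairs of $A_q$ (which are well defined with $\lambda_k>0$ because $q_0\geq 0$ and $q-q_0$ is bounded), one has on $\Lambda$
\[
\widetilde{\mathscr{C}}_q^\tau u_0=\sum_k \cos(t\sqrt{\lambda_k})\,\langle u_0,\phi_k\rangle\,\partial_\nu\phi_k|_\Upsilon .
\]
The role of the domain $\mathcal{H}$ and of the $H^1$-in-time range in \eqref{1.6} is precisely to absorb the extra factor $\sqrt{\lambda_k}$ produced by the time derivative, which keeps the relevant series below absolutely convergent.

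First I would extract stable boundary spectral data from the operator. Testing $\widetilde{\mathscr{C}}_q^\tau-\widetilde{\mathscr{C}}_{q_0}^\tau$ against initial data built from the eigenfunctions and using an Ingham-type almost-orthogonality inequality for the family $\{\cos(t\sqrt{\lambda_k})\}$ over an interval of length $\tau\geq\tau_0$, I would bound the differences of eigenvalues $|\lambda_k^q-\lambda_k^{q_0}|$ and of the Neumann traces $\|\partial_\nu\phi_k^q-\partial_\nu\phi_k^{q_0}\|_{L^2(\Upsilon)}$ by the operator norm $\varepsilon:=\|\widetilde{\mathscr{C}}_q^\tau-\widetilde{\mathscr{C}}_{q_0}^\tau\|$, uniformly up to a frequency threshold. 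The decisive point is that this extraction requires no geometric control condition on $\Upsilon$: one never needs a two-sided observability inequality, only the one-sided recovery of spectral coefficients, for which the length $\tau\geq\tau_0$ fixed by a spectral-gap constant suffices.

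Next I would convert the boundary spectral data into information on $q-q_0$. The difference $w=\mathscr{S}_q^\tau(u_0,0)-\mathscr{S}_{q_0}^\tau(u_0,0)$ solves the wave equation with source $(q_0-q)\mathscr{S}_{q_0}^\tau(u_0,0)$, which makes $q-q_0$ appear in the measurements; transcribed spectrally through the resolvents $(A_q-z)^{-1}$ and the associated partial Dirichlet-to-Neumann data, a standard representation controls the pairing $\langle (q-q_0)\phi,\psi\rangle$ by the spectral data up to the threshold frequency. Inserting complex geometrical optics solutions of $(-\Delta+q_0-z)\psi=0$, the low-frequency part of the Fourier transform of $q-q_0$ is estimated with polynomial dependence on $\varepsilon$, while its high-frequency part is dominated by the a priori bound $q-q_0\in mB_{W^{1,\infty}(\Omega)}$. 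Optimizing the cutoff frequency to balance these two contributions produces the logarithmic modulus $\Psi$, the exponent $\tfrac{1}{8+2n}$ arising from the chain of Sobolev interpolations that relates the frequency-dependent losses of the CGO estimate to the $W^{1,\infty}$ smoothness in dimension $n$.

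The technical heart of the argument, and the main obstacle, is the first step: reconstructing summably accurate boundary spectral data from a single operator norm in the absence of any geometric condition. Controlling the accumulation of eigenvalues and the gradual loss of orthogonality of the cosines at high frequencies, while keeping the extraction error summable, is exactly where the $\mathcal{H}$-to-$H^1$ smoothing in \eqref{1.6} and the observation length $\tau_0$ become essential; managing this interplay replaces the observability inequality used under a geometric hypothesis and is what allows the geometric condition on $\Upsilon$ to be dropped.
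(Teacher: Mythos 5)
Your route is genuinely different from the paper's, but it contains two gaps that I do not see how to close. The first is in your extraction step: you propose to recover the boundary spectral data $(\lambda_k,\partial_\nu\phi_k|_\Upsilon)$ from the operator norm via an Ingham-type inequality for the family $\{\cos(t\sqrt{\lambda_k})\}$ on $(0,\tau)$. Ingham's inequality requires a uniform gap $\sqrt{\lambda_{k+1}}-\sqrt{\lambda_k}\ge\delta>0$, and in dimension $n\ge 2$ the Dirichlet eigenvalues of $-\Delta+q_0$ violate this: Weyl's law gives $\sqrt{\lambda_k}\sim k^{1/n}$, so the gaps tend to zero and eigenvalues may be multiple or cluster. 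No choice of $\tau_0$ repairs this; you flag it yourself as ``the main obstacle,'' but the appeal to the $\mathcal{H}$-to-$H^1$ smoothing in \eqref{1.6} is an assertion, not an argument. The second gap is that, even granting the spectral data, the Borg--Levinson/CGO step you invoke needs the Neumann traces of eigenfunctions on all of $\Gamma$, or on a subset satisfying a geometric condition (as in partial-data Calder\'on results); with an arbitrary nonempty $\Upsilon$ the boundary integral over $\Gamma\setminus\Upsilon$ in the Alessandrini-type identity is uncontrolled. This is precisely the partial-data obstruction the theorem is designed to avoid, so your claim that the CGO step works ``without a geometric condition'' begs the question.

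The paper never separates frequencies and never uses CGO solutions. It feeds each eigenfunction $\phi_k$ of $A_0=-\Delta+q_0$ as initial datum; the difference of the two solutions solves a wave equation with source proportional to $(q-q_0)\phi_k\cos(\sqrt{\lambda_k}\,t)$, and Robbiano's interpolation inequality (Theorem \ref{theorem1.2}) --- a quantitative unique-continuation estimate valid with no geometric condition on $\Upsilon$, at the price of the structure $\frac{1}{\sqrt{\epsilon}}\|\cdot\|_{E_0}+e^{\mu\epsilon}\|\cdot\|_{L^2(\Lambda)}$ --- controls $\|(q-q_0)\phi_k\|_{H^{-1}(\Omega)}$, hence $(q-q_0,\phi_k)_{L^2(\Omega)}$ after interpolating between $H^{-1}$ and $H_0^1$ and using the $W^{1,\infty}$ bound on $q-q_0$. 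Summing over $k\le N$, bounding the tail by $\|q-q_0\|_{H^1(\Omega)}^2/\lambda_{N+1}$ via Weyl's law, and optimizing $N$ and $\epsilon$ produces $\Psi$ with the exponent $\frac{1}{8+2n}$. If you want to salvage your plan, the missing ingredient is exactly such a quantified unique-continuation substitute for observability; without it neither the frequency extraction nor the partial-data CGO step goes through.
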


The proof of Theorem \ref{theorem1.1} we present here follows the method initiated by the first and second authors in \cite{AC1}. This method is mainly based on a spectral decomposition combined with an observability inequality. Due to the fact that we do not assume any geometric condition on the domain, the classical observability inequality is no longer valid in our case. We substitute it by an interpolation inequality established by Robbiano in \cite{Ro}. It is worthwhile to mention that a spectral decomposition combined with an observability inequality was also used in \cite{AC2} to establish a logarithmic stability estimate for the problem of determining a boundary coefficient in a wave equation from boundary measurements.

\smallskip
To our knowledge, using observability inequalities to solve inverse problems related to the wave equation  goes back to Puel and Yamamoto \cite{PY}. Later, Komornik and Yamamoto \cite{KY} applied this method to an inverse point source problem for a wave equation. A general framework of this method is due to Alves, Silvestre, Takahashi and Tucsnak \cite{ASTT} and extended recently to singular sources by Tucsnak and Weiss \cite{TW}.

\smallskip
The rest of this paper consists in two sections. Section 2 is devoted to the proof of Theorem \ref{theorem1.1}. In Section 3, we adapt our approach to an inverse problem for the heat equation.


\section{Proof of Theorem \ref{theorem1.1}}

We firstly observe that a careful examination of the proof of \cite[Theorem 1, page 98]{Ro} allows us to deduce the following result.

\begin{theorem}\label{theorem1.2}
There exist three constants $\tau_0>0$, $C>0$ and $\mu >0$ so that, for all $\tau \ge \tau_0$, $(u_0,u_1)\in E_0$, $q\in mB_{L^\infty (\Omega )}$  and $\epsilon >0$,
\[
C\|(u_0,u_1)\|_{E_{-1}}\le \frac{1}{\sqrt{\epsilon}}\|(u_0,u_1)\|_{E_0}+e^{\mu \epsilon}\|\mathscr{C}_q^\tau (u_0,u_1)\|_{L^2(\Lambda )},
\]
where $E_{-1}=L^2(\Omega )\oplus H^{-1}(\Omega )$.
\end{theorem}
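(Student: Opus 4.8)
The plan is to deduce Theorem~\ref{theorem1.2} from the quantitative unique continuation / interpolation inequality of Robbiano, whose whole point is that it requires a large observation time $\tau$ but \emph{no} geometric control condition on $\Upsilon$. Concretely, I would first record Robbiano's inequality in the form adapted to the energy spaces at hand: there are $\tau_0>0$, $C>0$ and an exponent $\sigma>0$ (here $\sigma=1/2$) so that, for $\tau\ge\tau_0$ and $u=\mathscr{S}_q^\tau(u_0,u_1)$,
\[
\|(u_0,u_1)\|_{E_{-1}}\le C\,\frac{\|(u_0,u_1)\|_{E_0}}{\left[\ln\left(2+\|(u_0,u_1)\|_{E_0}/\|\mathscr{C}_q^\tau(u_0,u_1)\|_{L^2(\Lambda )}\right)\right]^{\sigma}} .
\]
The well-posedness recalled after \eqref{1.1} and the trace bound \eqref{1.4} guarantee that both sides are finite and that $\mathscr{C}_q^\tau(u_0,u_1)\in L^2(\Lambda )$, so this statement is meaningful; the weaker space $E_{-1}$ on the left is exactly what one can hope to recover from a boundary trace.

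Second, I would convert this logarithmic modulus into the additive $\epsilon$-family asserted in the theorem by an elementary two-regime argument. Writing $M=\|(u_0,u_1)\|_{E_0}$, $N=\|(u_0,u_1)\|_{E_{-1}}$ and $\rho=\|\mathscr{C}_q^\tau(u_0,u_1)\|_{L^2(\Lambda )}$, fix $\epsilon>0$. If $\ln(2+M/\rho)\ge\epsilon$ the displayed inequality already gives $N\le C\,M/\sqrt{\epsilon}$; if instead $\ln(2+M/\rho)<\epsilon$ then $M\le e^{\epsilon}\rho$, and the trivial continuous embedding $E_0\hookrightarrow E_{-1}$ yields $N\le CM\le Ce^{\epsilon}\rho\le e^{\mu\epsilon}\rho$ once $\mu$ is taken large enough. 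Combining the two cases produces the claim for every $\epsilon>0$. Morally the two terms are the high- and low-frequency contributions of the data: the factor $1/\sqrt\epsilon$ is the spectral gap between the $E_0$- and $E_{-1}$-norms above the frequency threshold $\lambda_k\sim\epsilon$, while $e^{\mu\epsilon}$ is the exponential observation cost of the low modes.

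The real work, and the reason the statement is phrased as a consequence of ``a careful examination of the proof'' of \cite[Theorem~1]{Ro}, is to certify that $\tau_0$, $C$ and $\mu$ can be chosen \emph{uniformly} over $q\in mB_{L^\infty (\Omega )}$, and that Robbiano's inequality indeed holds in the spaces $E_{-1}$, $E_0$. I would therefore re-enter Robbiano's argument, which rests on a Carleman estimate and a propagation-of-smallness (three-region interpolation) scheme, and in which the potential appears only as a bounded zeroth-order term $qu$. This term is absorbed by the Carleman estimate as soon as the large parameter exceeds a fixed multiple of $\|q\|_{L^\infty (\Omega )}\le m$, so every constant depends on $q$ only through $m$. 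I expect the main obstacle to be precisely this bookkeeping: checking that the Carleman weight and the interpolation constants depend on $\Omega$, $\Upsilon$, $n$ and $m$ but not on $q$ itself, and that the threshold time $\tau_0$, which is dictated by the finite speed of propagation and the geometry rather than by $q$, can be fixed uniformly.

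Finally, translating Robbiano's native norms on the cylinder $Q$ into the initial-data energy norms $E_0$ and $E_{-1}$, again using the a priori estimates recalled after \eqref{1.1}, is routine once this uniformity has been secured, and completes the reduction.
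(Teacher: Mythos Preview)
Your plan is essentially the paper's own: the authors give no argument beyond the sentence ``a careful examination of the proof of \cite[Theorem~1]{Ro} allows us to deduce the following result,'' so your proposal is in fact \emph{more} explicit than what appears in the text. Deriving the $\epsilon$-family from a logarithmic modulus, and stressing that the Carleman constants depend on $q$ only through the bound $m$, are both the right points to make.

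One small slip in your two-regime conversion: in the case $\ln(2+M/\rho)<\epsilon$ you write $N\le C_0M\le C_0 e^{\epsilon}\rho\le e^{\mu\epsilon}\rho$ ``once $\mu$ is taken large enough''. No choice of $\mu$ makes $C_0 e^{\epsilon}\le e^{\mu\epsilon}$ hold for all $\epsilon>0$ when $C_0>1$, since the inequality fails as $\epsilon\to 0^+$. The fix is to absorb $C_0$ into the constant $C$ on the \emph{left-hand side} of the final estimate rather than into $\mu$: from $N\le C_0 e^{\epsilon}\rho$ one gets $C_0^{-1}N\le e^{\epsilon}\rho$, and then taking the overall constant to be $\min(C_{\mathrm{Rob}}^{-1},C_0^{-1})$ and $\mu\ge 1$ covers both regimes. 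This is cosmetic, but worth getting right since the inequality is claimed for every $\epsilon>0$.

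A side remark: Robbiano's proof itself proceeds by a spectral cutoff at a tunable frequency level and estimates low and high modes separately, so the $\epsilon$-form is arguably what one reads off \emph{directly} from his argument, without first passing through the logarithmic statement. Either route is fine.
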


From now on, $\tau \ge \tau_0$ is fixed, where $\tau_0$ is as in the preceding theorem.

\smallskip
Let $g \in H^1((0,\tau ))$ satisfying $g(0)\ne 0$ and consider the IBVP
\begin{equation}\label{1.2}
\left\{
\begin{array}{lll}
 \partial _t^2 v - \Delta v + q(x)v  = g(t)f(x) \;\; &\mbox{in}\;   Q, 
 \\
v = 0 &\mbox{on}\;  \Sigma , 
\\
v(\cdot ,0) = 0,\; \partial_t v (\cdot ,0) = 0,
\end{array}
\right.
\end{equation}

From  \cite[Theorem A.3, page 493]{BCY},  the IBVP \eqref{1.2} has a unique solution 
\[v:=\mathcal{S}_q^\tau (f,g)\in C([0,\tau ]; H_0^1(\Omega ))\]
so that 
\[\partial _t v\in C([0,\tau ]; L^2(\Omega ))\;\; \mathrm{and}\;\; \partial _\nu v\in L^2(\Sigma ).\] 
Applying Duhamel's formula we get in a straightforward manner
\[
\mathcal{S}_q^\tau (f,g)(\cdot ,t)=\int_0^t g(t-s)\mathscr{S}_q^\tau (0,f)(\cdot ,s)ds.
\]
Therefore
\[
\mathcal{C}_q^\tau (f,g)(\cdot ,t):=\partial _\nu \mathcal{S}_q^\tau (f,g)(\cdot ,t)=\int_0^t g(t-s)\mathscr{C}_q^\tau (0,f)(\cdot ,s)ds.
\]

Let
\[
H^1_\ell ((0,\tau), L^2(\Upsilon )) = \left\{u \in H^1((0,\tau), L^2(\Upsilon )); \; u(0) = 0 \right\}
\]

and define the operator $S:L^2(\Lambda )\longrightarrow H^1_\ell ((0,\tau ) ,L^2(\Upsilon ))$ by
\begin{equation*}
(Sh)(t)=\int_0^t g(t-s)h(s)ds.
\end{equation*}

From \cite[Theorem 2.1]{AC1} and its proof, $S$ is an isomorphism and
\[
\|h\|_{L^2(\Lambda )}\leq \frac{\sqrt{2}}{|g(0)|}e^{\tau\frac{\|g'\|_{L^2((0,\tau))}^2}{|g(0)|^2}}\|Sh \|_{H^1((0,\tau ),L^2(\Upsilon ))}.
\]
Consequently
\[
\|\mathscr{C}_q^\tau (0,f)\|_{L^2(\Lambda )}\le \frac{\sqrt{2}}{|\lambda (0)|}e^{\tau\frac{\|\lambda '\|_{L^2((0,\tau))}^2}{|\lambda (0)|^2}}\| \mathcal{C}_q^\tau (f,g)\|_{H^1 ((0,\tau ),L^2(\Upsilon ))}.
\]
In combination with the estimate in Theorem \ref{theorem1.2}, this inequality yields, where $\epsilon >0$ is arbitrary,
\begin{equation}\label{1.3}
C\|f\|_{H^{-1}(\Omega )}\le \frac{1}{\sqrt{\epsilon}}\|f\|_{L^2(\Omega )}+\frac{\sqrt{2}}{|g(0)|}e^{\tau\frac{\|g'\|_{L^2((0,\tau))}^2}{|g(0)|^2}}e^{\mu \epsilon}\| \mathcal{C}_q^\tau (f,g)\|_{H^1 ((0,\tau ),L^2(\Upsilon ))}.
\end{equation}

Let $q_0,q\in mB_{L^\infty (\Omega )}$ satisfying $q_0\geq 0$ and $q-q_0\in mB_{W^{1,\infty }(\Omega )}$.

\smallskip
 Consider the unbounded operator $A_0:L^2(\Omega )\rightarrow L^2(\Omega )$ given by
\[
A_0=-\Delta +q_0 ,\;\;  D(A_0)=H_0^1(\Omega )\cap H^2(\Omega ).
\]
Let $0 <\lambda _1\le \lambda _2 \le  \ldots \le \lambda _k \ldots \rightarrow +\infty$ be the sequence of eigenvalues of  the operator $A_0$ and $(\phi _k)$ a sequence of the corresponding eigenfunctions so that $(\phi _k)$ form an orthonormal basis of $L^2(\Omega )$.

\smallskip
It is a simple exercise to check that $\mathscr{S}_{q_0}^\tau (\phi _k,0)=g_k(t)\phi _k$ with $g_k(t)=\cos (\sqrt{\lambda_k}t)$.

Observing that 
\[
\mathscr{S}_q^\tau (\phi _k,0)-\mathscr{S}_{q_0}(\phi _k,0)=\mathcal{S}_q^\tau ((q-q_0)\phi _k,g_k),
\]
we get
\[
\widetilde{\mathscr{C}}_q^\tau (\phi _k)-\widetilde{\mathscr{C}}_{q_0}^\tau (\phi _k)=\mathcal{C}_q^\tau ((q-q_0)\phi _k,g_k).
\]
Hence \eqref{1.3} gives

\[
C\|(q-q_0)\phi _k\|_{H^{-1}(\Omega )}\le \frac{1}{\sqrt{\epsilon}}\|(q-q_0)\phi _k\|_{L^2(\Omega )}+e^{\tau^2\lambda _k}e^{\mu \epsilon}\| \widetilde{\mathscr{C}}_q^\tau (\phi _k)-\widetilde{\mathscr{C}}_{q_0}^\tau (\phi _k)\|_{H^1_\ell ((0,\tau ),L^2(\Upsilon ))}.
\]
This and the fact that $\|\phi _k\|_{\mathcal{H}}\le \sqrt{\lambda _k}+m+\lambda _k$ imply
\[
C\|(q-q_0)\phi _k\|_{H^{-1}(\Omega )}\le \frac{1}{\sqrt{\epsilon}}\|(q-q_0)\phi _k\|_{L^2(\Omega )}+(\sqrt{\lambda _k}+m+\lambda _k)e^{\tau^2\lambda _k}e^{\mu \epsilon}\| \widetilde{\mathscr{C}}_q^\tau -\widetilde{\mathscr{C}}_{q_0}^\tau \|_{\mathscr{B}(\mathcal{H}, H^1((0,\tau ),L^2(\Upsilon )))}.
\]
But $(\sqrt{\lambda _k}+m+\lambda _k) \leq (\mu _1^{-1/2}+m\mu^{-1}+1)\lambda _k\le e^{(\mu _1^{-1/2}+m\mu^{-1}+1)\lambda _k}$, where $\mu _1$ is the first eigenvalue of the Laplace operator under Dirichlet boundary condition. Whence
\[
C\|(q-q_0)\phi _k\|_{H^{-1}(\Omega )}\le \frac{1}{\sqrt{\epsilon}}\|(q-q_0)\phi _k\|_{L^2(\Omega )}+e^{\kappa \lambda _k}e^{\mu \epsilon}\| \widetilde{\mathscr{C}}_q^\tau -\widetilde{\mathscr{C}}_{q_0}^\tau \|_{\mathscr{B}(\mathcal{H}, H^1((0,\tau ),L^2(\Upsilon )))}.
\]
Here $\kappa =\tau ^2+\mu _1^{-1/2}+m\mu^{-1}+1$.

\smallskip
Since $\|(q-q_0)\phi _k\|_{L^2(\Omega )}\le m$, we have
\[
C\|(q-q_0)\phi _k\|_{H^{-1}(\Omega )}\le \frac{1}{\sqrt{\epsilon}}+e^{\kappa \lambda _k}e^{\mu \epsilon}\| \widetilde{\mathscr{C}}_q^\tau -\widetilde{\mathscr{C}}_{q_0}^\tau \|_{\mathscr{B}(\mathcal{H}, H^1((0,\tau ),L^2(\Upsilon )))}.
\]

Using the usual interpolation inequality 
\[
\|h\|_{L^2(\Omega )}\le c\|h\|_{H_0^1(\Omega )}^{1/2}\|h\|_{H^{-1}(\Omega )}^{1/2}\;\; h\in H_0^1(\Omega ),
\]
we obtain
\[
C\|(q-q_0)\phi _k\|_{L^2(\Omega )}^2\le \|(q-q_0)\phi _k\|_{H_0^1(\Omega )}\left( \frac{1}{\sqrt{\epsilon}}+e^{\kappa \lambda _k}e^{\mu \epsilon}\| \widetilde{\mathscr{C}}_q^\tau -\widetilde{\mathscr{C}}_{q_0}^\tau \|_{\mathscr{B}(\mathcal{H}, H^1((0,\tau ),L^2(\Upsilon )))}\right).
\]

Bearing in mind that $\| q-q_0\|_{W^{1,\infty}(\Omega )}\le m$, we get
\begin{align*}
\|(q-q_0)\phi _k\|_{H_0^1(\Omega )}&\le \|(q-q_0)\nabla \phi _k\|_{L^2(\Omega )^n}+\|\phi _k\nabla (q-q_0)\|_{L^2(\Omega )^n}
\\
&\le m\sqrt{\lambda_k}+m
\\
&\le m(1+\mu_1^{-1/2})\sqrt{\lambda_k}
\\
&\le m\mu_1^{-1/2}(1+\mu_1^{-1/2})\lambda_k .
\end{align*}

Consequently
\[
C\|(q-q_0)\phi _k\|_{L^2(\Omega )}\le \frac{\lambda _k}{\sqrt{\epsilon}}+e^{(\kappa +1)\lambda _k}e^{\mu \epsilon }\| \widetilde{\mathscr{C}}_q^\tau -\widetilde{\mathscr{C}}_{q_0}^\tau \|_{\mathscr{B}(\mathcal{H}, H^1((0,\tau ),L^2(\Upsilon )))}.
\]

\smallskip
Denote the scalar product of $L^2(\Omega )$ by $(\cdot ,\cdot )_{L^2(\Omega )}$. By Cauchy-Schwarz's inequality 
\[\left| (q-q_0,\phi _k)_{L^2(\Omega )}\right|\leq |\Omega|^{1/2}\|(q-q_0)\phi _k\|_{L^2(\Omega )}.\]
Therefore
\begin{equation}\label{1.7}
C (q-q_0,\phi _k)_{L^2(\Omega )}^2\le  \frac{\lambda _k}{\sqrt{\epsilon}}+e^{(\kappa +1)\lambda _k}e^{\mu \epsilon }\| \widetilde{\mathscr{C}}_q^\tau -\widetilde{\mathscr{C}}_{q_0}^\tau \|_{\mathscr{B}(\mathcal{H}, H^1((0,\tau ),L^2(\Upsilon )))}.
\end{equation}

According to the min-max principle, there exists $\widetilde{c}>1$ (depending on $m$ but not  on $q$) so that
\begin{equation}\label{1.8}
\widetilde{c}^{-1}k^{2/n}\le \lambda _k\le \widetilde{c}k^{2/n}.
\end{equation}
We refer to \cite{Ka} for a proof.

\smallskip
Estimates  \eqref{1.7} and \eqref{1.8} entail
\begin{equation}\label{1.9}
C (q-q_0,\phi _k)_{L^2(\Omega )}^2\le  \frac{k^{2/n}}{\sqrt{\epsilon}}+e^{\varrho k^{2/n}}e^{\mu \epsilon }\| \widetilde{\mathscr{C}}_q^\tau -\widetilde{\mathscr{C}}_{q_0}^\tau \|_{\mathscr{B}(\mathcal{H}, H^1((0,\tau ),L^2(\Upsilon )))},
\end{equation}
with $\varrho =\widetilde{c}(\kappa +1)$.

\smallskip
Let $N\geq 1$ be an integer. Using that $\left(\sum_{k\ge 1}(1+\lambda _k)(\cdot ,\phi _k)_{L^2(\Omega )}^2\right)^{1/2}$ is an equivalent norm on $H^1(\Omega )$, 
\[
\|q-q_0\|_{H^1(\Omega )}^2\le |\Omega |\left(\|q-q_0\|_{L^\infty(\Omega )}^2+\|\nabla (q-q_0)\|_{L^\infty(\Omega )^n}^2\right)\le c_\Omega \|q-q_0\|_{W^{1,\infty}(\Omega )}^2
\]
and \eqref{1.8}, we get
\begin{align*}
\|q-q_0\|_{L^2(\Omega )}^2 &=\sum_{k\le N}(q-q_0,\phi _k)_{L^2(\Omega )}^2+\sum_{k>N}(q-q_0,\phi _k)_{L^2(\Omega )}^2
\\
&\le \sum_{k\le N}(q-q_0,\phi _k)_{L^2(\Omega )}^2 +\frac{1}{\lambda _{N+1}}\sum_{k>N}\lambda _k(q-q_0,\phi _k)_{L^2(\Omega )}^2
\\
&\le \sum_{k\le N}(q-q_0,\phi _k)_{L^2(\Omega )}^2 +\frac{c_\Omega m^2}{\widetilde{c}(N+1)^{2/n}}.
\end{align*}
In combination with \eqref{1.9}, this estimate yields
\begin{equation}\label{1.10}
C\|q-q_0\|_{L^2(\Omega )}^2 \le \frac{N^{1+2/n}}{\sqrt{\epsilon}}+\frac{1}{(N+1)^{2/n}}+Ne^{\varrho N^{2/n}}e^{\mu \epsilon }\| \widetilde{\mathscr{C}}_q^\tau -\widetilde{\mathscr{C}}_{q_0}^\tau \|_{\mathscr{B}(\mathcal{H}, H^1((0,\tau ),L^2(\Upsilon )))}.
\end{equation}

For $s\geq 1$ a real number, let $N$ be the unique integer so that $N\leq s<N+1$. Then \eqref{1.10} implies
\[
C\|q-q_0\|_{L^2(\Omega )}^2 \le \frac{s^{1+2/n}}{\sqrt{\epsilon}}+\frac{1}{s^{2/n}}+se^{\varrho s^{2/n}}e^{\mu \epsilon }\| \widetilde{\mathscr{C}}_q^\tau -\widetilde{\mathscr{C}}_{q_0}^\tau \|_{\mathscr{B}(\mathcal{H}, H^1((0,\tau ),L^2(\Upsilon )))}.
\]
Taking $\epsilon =s^{8/n+2}$ in this inequality, we find
\[
C\|q-q_0\|_{L^2(\Omega )}^2 \le \frac{1}{s^{2/n}}+se^{\varrho s^{2/n}}e^{\mu s^{8/n+2}}\| \widetilde{\mathscr{C}}_q^\tau -\widetilde{\mathscr{C}}_{q_0}^\tau \|_{\mathscr{B}(\mathcal{H}, H^1((0,\tau ),L^2(\Upsilon )))},
\]
and then
\begin{equation}\label{1.11}
C\|q-q_0\|_{L^2(\Omega )}^2 \le \frac{1}{s^{2/n}}+e^{\theta s^{8/n+2}}\| \widetilde{\mathscr{C}}_q^\tau -\widetilde{\mathscr{C}}_{q_0}^\tau \|_{\mathscr{B}(\mathcal{H}, H^1((0,\tau ),L^2(\Upsilon )))},
\end{equation}
with $\theta =1+\varrho+\mu$.

\smallskip
We use the temporary notation $\gamma =\| \widetilde{\mathscr{C}}_q^\tau -\widetilde{\mathscr{C}}_{q_0}^\tau \|_{\mathscr{B}(\mathcal{H}, H^1((0,\tau ),L^2(\Upsilon )))}$. We consider the function $\chi (s)=s^{2/n}e^{\theta s^{8/n+2}}$, $s\geq 1$. Under the condition $\gamma \le \gamma ^\ast =e^{-\theta}$, there exist $s^\ast \ge 1$ so that $\chi (s^\ast)=\gamma ^{-1}$. In that case $s=s^\ast$ in \eqref{1.11} gives in a straightforward manner
\begin{equation}\label{1.12}
C\|q-q_0\|_{L^2(\Omega )} \leq |\ln \gamma |^{-\frac{1}{8+2n}}.
\end{equation}
When $\gamma \ge \gamma ^\ast$, we have trivially
\begin{equation}\label{1.13}
\|q-q_0\|_{L^2(\Omega )}\le m |\Omega |^{1/2}\leq m |\Omega |^{1/2}\frac{\gamma}{\gamma ^\ast}.
\end{equation}
In light of \eqref{1.12} and \eqref{1.13}, we end up getting
\[
C\|q-q_0\|_{L^2(\Omega )} \le \Psi \left(\| \widetilde{\mathscr{C}}_q^\tau -\widetilde{\mathscr{C}}_{q_0}^\tau \|_{\mathscr{B}(\mathcal{H}, H^1((0,\tau ),L^2(\Upsilon )))}\right)
\]
as it is expected.

\begin{remark}\label{remark1.1}
Fix $g$ and $q$ in \eqref{1.2}, and let $\mathcal{C}^\tau (f) :=\mathcal{C}_q^\tau (f ,g)$. We can then use \eqref{1.3} to derive a stability estimate for inverse source problem consisting in the determination of $f$ from $\mathcal{C}^\tau (f)$. A minimization argument in $\epsilon$ leads to the following result: there exist two constants $\tau _0>0$ and  $C>0$ so that, for any $\tau \ge \tau_0$ and $f\in mB_{L^2(\Omega )}$,
\[
C\|f\|_{H^{-1}(\Omega )}\le \Phi \left(\| \mathcal{C}^\tau (f) \|_{H^1((0,\tau ),L^2(\Upsilon ))}\right),
\]
where $\Phi (\gamma )=\left| \ln \gamma \right|^{-1/2}+\gamma$, $\gamma >0$.
\end{remark}

\begin{remark}\label{remark1.2}
Let us explain briefly how one can get a stability estimate for the inverse problem of determining the damping coefficient or the damping coefficient together with the potential, from boundary measurements, again by varying the initial conditions. Let us substitute in the first equation of the IBVP \eqref{1.1} the operator $W=\partial _t^2 -\Delta$ by $W_a=\partial _t^2-\Delta +a(x)\partial _t$. The function $a$ is usually taken bounded and non negative. It is called the damping coefficient. In that case Theorem \ref{theorem1.2} holds if the operator $W$ is substituted by  $W_a$. This can be seen by examining the proof in \cite{Ro}. The only difference between the two cases is that in the present case there is an additional term in the quantity between the brackets in \cite[formula (8), page 109]{Ro}. But this supplementary term does not modify the estimate  \cite[formula (9), page 109]{Ro}. The rest of the analysis remains unchanged. These observations at hand, we can extend \cite[Theorem 1.1, (1.3) and (1.4)]{AC1} and \cite[Theorem 4.2]{AC1} to the case where no geometric condition is imposed to the  sub-boundary where the measurements are made. We leave to the interested reader to write down the details.
\end{remark}


\section{Extension to the heat equation}

We begin with an inverse source problem associated to the following IBVP for the heat equation.
\begin{equation}\label{3.1}
\left\{
\begin{array}{lll}
 \partial _t u - \Delta u + q(x)u  = g(t)f(x) \;\; &\mbox{in}\;   Q, 
 \\
u = 0 &\mbox{on}\;  \Sigma, 
\\
u(\cdot ,0) = 0.
\end{array}
\right.
\end{equation}

From now on $\tau >0$ is arbitrary but fixed.

\smallskip
Recall that the anisotropic Sobolev space $H^{2,1}(Q)$ is given as follows
\[
H^{2,1}(Q)=L^2((0,\tau ),H^2(\Omega ))\cap H^1((0,\tau ), L^2(\Omega )).
\]
It is well known that for any $f\in L^2(\Omega )$, $g\in L^2(0,\tau )$ and $q\in L^\infty (\Omega )$, the IBVP \eqref{3.1} has a unique solution 
\[u:=\mathscr{S}_q(f,g)\in H^{2,1}(Q).\] 
Moreover,  there exist a constant $C'=C'(\Omega ,m)>0$ so that, for any $q\in mB_{L^\infty (\Omega )}$,
\begin{equation}\label{3.2}
\|\mathscr{S}_q(f,g)\|_{H^{2,1}(Q)}\leq C'\|g\|_{L^2((0,\tau ))}\|f\|_{L^2(\Omega )}.
\end{equation}
We refer to \cite[Theorem 1.43, page 27]{choulli} and references therein for the statement of these results in the case of a general parabolic IBVP with non zero initial and boundary conditions.

\smallskip
Under the additional assumption that $g\in H^1(0,\tau )$, it is not hard to check that $\partial _tu$ is the solution of the IBVP \eqref{3.1} with $g$ substituted by $g'$. Hence $\partial _t \mathscr{S}_q(f,g)\in H^{2,1}(Q)$ and
\begin{equation}\label{3.3}
\|\partial _t\mathscr{S}_q(f,g)\|_{H^{2,1}(Q)}\leq C'\|g'\|_{L^2((0,\tau ))}\|f\|_{L^2(\Omega )},
\end{equation}
uniformly in $q\in mB_{L^\infty (\Omega )}$, where $C'$ is the same constant as in \eqref{3.2}.

\smallskip
As in the introduction, $\Upsilon$ is a non empty open subset of $\Gamma$ and $\Lambda =\Upsilon \times (0,\tau )$. In light of the preceding analysis $\partial _\nu \mathscr{S}_q(f,g)$ is well defined as an element of $H^1((0,\tau );L^2(\Upsilon))$ and
\[
\|\partial _\nu \mathscr{S}_q(f,g)\|_{H^1((0,\tau );L^2(\Upsilon))}\le C''\|g\|_{H^1((0,\tau ))}\|f\|_{L^2(\Omega )},
\]
for some constant $C''=C''(m,\Omega )$ uniformly in $q\in mB_{L^\infty (\Omega )}$.

\smallskip
The  following interpolation inequality will be useful in the sequel.

\begin{theorem}\label{theorem3.1}
There exist two constants $c>0$ and  $C>0$ so that, for any $q\in mB_{L^\infty (\Omega )}$, $f\in H_0^1(\Omega )$ and $g\in H^1((0,\tau ))$ with $g(0)\ne 0$,
\begin{equation}\label{3.4}
C\|f\|_{L^2(\Omega )}\leq \frac{1}{\sqrt{\epsilon}}\|f\|_{H_0^1(\Omega )}+\frac{1}{|g(0)|}e^{\tau\frac{\|g'\|_{L^2((0,\tau))}^2}{|g(0)|^2}}e^{c\epsilon}\|\partial _\nu \mathscr{S}_q(f,g)\|_{H^1((0,\tau );L^2(\Upsilon))},\;\; \epsilon \ge 1.
\end{equation}
\end{theorem}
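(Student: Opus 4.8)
The plan is to follow the same two-stage strategy already used for the wave equation: first strip off the time profile $g$ by inverting the convolution operator $S$, reducing matters to the free parabolic semigroup, and then establish the underlying interpolation inequality for the homogeneous heat equation. For the first stage I would invoke Duhamel's formula. Let $w$ be the solution of the homogeneous problem $\partial_s w-\Delta w+qw=0$ in $Q$, $w=0$ on $\Sigma$, $w(\cdot,0)=f$. Then $\mathscr{S}_q(f,g)(\cdot,t)=\int_0^t g(t-s)w(\cdot,s)\,ds$, whence $\partial_\nu\mathscr{S}_q(f,g)=S(\partial_\nu w)$ on $\Lambda$; since $f\in H_0^1(\Omega)$, parabolic smoothing guarantees $\partial_\nu w\in L^2(\Lambda)$. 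Applying the isomorphism property of $S$ taken from \cite{AC1}, exactly as in the wave section, gives
\[
\|\partial_\nu w\|_{L^2(\Lambda)}\le\frac{\sqrt{2}}{|g(0)|}e^{\tau\frac{\|g'\|_{L^2((0,\tau))}^2}{|g(0)|^2}}\|\partial_\nu\mathscr{S}_q(f,g)\|_{H^1((0,\tau);L^2(\Upsilon))}.
\]
This step is routine and accounts precisely for the constant $\tfrac{1}{|g(0)|}e^{\tau\|g'\|^2/|g(0)|^2}$ appearing in \eqref{3.4}.

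It then suffices to prove the core estimate $C\|f\|_{L^2(\Omega)}\le\tfrac{1}{\sqrt{\epsilon}}\|f\|_{H_0^1(\Omega)}+e^{c\epsilon}\|\partial_\nu w\|_{L^2(\Lambda)}$ for the homogeneous semigroup, which is the exact parabolic counterpart of Theorem \ref{theorem1.2}. I would obtain it, as in the hyperbolic case, by a careful examination of a Robbiano-type interpolation inequality, now for the parabolic operator $\partial_t-\Delta+q$: the boundary flux on the cylinder $\Upsilon\times(0,\tau)$ controls the full $L^2(\Omega)$ norm of the datum up to a loss measured by $\epsilon$. Spectrally, expanding $f=\sum_k f_k\phi_k$ in the eigenbasis of $-\Delta+q$ and splitting at a frequency threshold $\Lambda$, the high frequencies satisfy $\sum_{\lambda_k>\Lambda}f_k^2\le C\Lambda^{-1}\|f\|_{H_0^1(\Omega)}^2$ and are absorbed into the weak term, while the low-frequency part is captured by an exponential factor of the form $e^{c\Lambda}$. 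Balancing with the choice $\Lambda\sim\epsilon$ then reproduces exactly the $\bigl(\tfrac{1}{\sqrt{\epsilon}},e^{c\epsilon}\bigr)$ structure of \eqref{3.4}, the restriction $\epsilon\ge 1$ being harmless.

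The hard part will be this low-frequency observation estimate, that is, the quantitative unique continuation for the heat operator from the flux on $\Upsilon\times(0,\tau)$ when no geometric condition is imposed on $\Upsilon$. A naive argument combining the Lebeau--Robbiano spectral inequality $\bigl\|\sum_{\lambda_k\le\Lambda}b_k\phi_k\bigr\|_{L^2(\Omega)}\le Ce^{c\sqrt{\Lambda}}\bigl\|\partial_\nu\sum_{\lambda_k\le\Lambda}b_k\phi_k\bigr\|_{L^2(\Upsilon)}$ with a time integration fails, because the measured flux $\partial_\nu w$ mixes all frequencies, and the high-frequency contribution to that flux is multiplied by the large constant, not by a small one. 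Hence the two frequency ranges cannot be decoupled at the level of the boundary data, and one is forced to treat all frequencies simultaneously through the parabolic interpolation (Carleman / frequency-function) machinery, exactly as Robbiano's argument does in \cite{Ro} for the hyperbolic case. Verifying that the constants produced there depend on $q$ only through the bound $m$, and that they organize into the precise dependence displayed in \eqref{3.4}, is where the genuine work lies; the transmutation relating the heat and wave groups does not help here, since it expresses heat solutions in terms of wave solutions and not conversely, so Theorem \ref{theorem1.2} cannot simply be chained in.
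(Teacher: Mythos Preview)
Your first stage---Duhamel's formula followed by inversion of the convolution operator $S$---is exactly what the paper does, and yields the same constant in front of the boundary term. Your high-frequency bound $\sum_{\lambda_k>\Lambda}f_k^2\le C\Lambda^{-1}\|f\|_{H_0^1(\Omega)}^2$ also matches the paper.

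The gap is in the low-frequency step. You correctly observe that applying the Lebeau--Robbiano spectral inequality to $\partial_\nu w(\cdot,t)$ at each time fails because of frequency mixing, and you then propose to fall back on a parabolic Carleman/Robbiano interpolation argument whose details you leave open (``where the genuine work lies''). But the paper bypasses this difficulty entirely with a much simpler device: the \emph{final time observability inequality}
\[
\|w(\cdot,\tau)\|_{L^2(\Omega)}\le K\,\|\partial_\nu w\|_{L^2(\Lambda)},
\]
quoted from \cite{AC1} (equivalently, the dual of null controllability of the heat equation, which holds for any nonempty $\Upsilon$ with no geometric condition). Since $w(\cdot,\tau)=\sum_k e^{-\lambda_k\tau}(f,\phi_k)_{L^2(\Omega)}\phi_k$, this single inequality immediately gives
\[
(f,\phi_\ell)_{L^2(\Omega)}^2\le e^{2\lambda_\ell\tau}\|w(\cdot,\tau)\|_{L^2(\Omega)}^2\le K^2 e^{2\lambda_\ell\tau}\|\partial_\nu w\|_{L^2(\Lambda)}^2,
\]
so summing over $\ell\le N$ yields $\sum_{\ell\le N}(f,\phi_\ell)^2\le N e^{2\lambda_N\tau}K^2\|\partial_\nu w\|_{L^2(\Lambda)}^2$, and after invoking Weyl's asymptotics and choosing $N\sim\epsilon^{n/2}$ one obtains the $e^{c\epsilon}$ factor directly. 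No frequency mixing issue arises, because the final-state observability controls \emph{all} modes simultaneously; the exponential loss comes only from inverting the decay $e^{-\lambda_\ell\tau}$ mode by mode afterwards.

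In short, you overlooked that the known final-time observability for the heat equation (valid without geometric hypotheses on $\Upsilon$) replaces the hyperbolic Theorem~\ref{theorem1.2} here, making the elaborate parabolic-Carleman program you outline unnecessary.
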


\begin{proof}
Pick $f\in H_0^1(\Omega )$ and $q\in mB_{L^\infty (\Omega )}$. Without loss of generality, we may assume that $q\geq 0$. Indeed, we have only to substitute $u$ by $ue^{-m t}$, which is the solution of the IBVP \eqref{3.1} when $q$ is replaced by $q+m\in 2mB_{L^\infty (\Omega )}$.

\smallskip
Let $v:=\mathcal{S}_q(f)\in H^{2,1}(Q)$ be the unique solution of the IBVP
\begin{equation*}
\left\{
\begin{array}{lll}
 \partial _t v - \Delta v + q(x)v  = 0 \;\; &\mbox{in}\;   Q, 
 \\
v = 0 &\mbox{on}\;  \Sigma, 
\\
v(\cdot ,0) = f.
\end{array}
\right.
\end{equation*}
Then $\partial _\nu \mathcal{S}_q(f)$ is well defined as an element of $L^2(\Lambda )$. As for the wave equation
\[
\partial _\nu \mathscr{S}_q(f,g)_{|\Lambda }(\cdot ,t)=\int_0^t g(t-s)\partial _\nu \mathcal{S}_q(f)_{|\Lambda}(\cdot ,s)ds.
\]
Therefore
\begin{equation}\label{3.5}
\|\partial _\nu \mathcal{S}_q(f)\|_{L^2(\Lambda )}\le \frac{\sqrt{2}}{|g(0)|}e^{\tau\frac{\|g'\|_{L^2((0,\tau))}^2}{|g(0)|^2}}\|\partial _\nu \mathscr{S}_q(f,g)\|_{H^1((0,\tau );L^2(\Upsilon ))}.
\end{equation}

On the other hand, as it is shown in \cite{AC1}, the following final time observability inequality holds
\begin{equation}\label{3.6}
\|\mathcal{S}_q(f)(\cdot ,\tau )\|_{L^2(\Omega )}\le K\|\partial _\nu \mathcal{S}_q(f)\|_{L^2(\Lambda )},
\end{equation}
for some constant $K>0$, independent on $q$ and $f$.

\smallskip
A combination of \eqref{3.5} and \eqref{3.6} implies
\begin{equation}\label{3.7}
C\|\mathcal{S}_q(f)(\cdot ,\tau )\|_{L^2(\Omega )}\le \frac{1}{|g(0)|}e^{\tau\frac{\|g'\|_{L^2((0,\tau))}^2}{|g(0)|^2}}\|\partial _\nu \mathscr{S}_q(f,g)\|_{H^1((0,\tau );L^2(\Upsilon ))}.
\end{equation}

Denote by $0 <\lambda _1\le \lambda _2 \le  \ldots \le \lambda _k \ldots \rightarrow +\infty$ the sequence of eigenvalues of  the unbounded operator $A:L^2(\Omega )\rightarrow L^2(\Omega )$ given by
\[ A=-\Delta +q, \;\;D(A)=H_0^1(\Omega )\cap H^2(\Omega ).\]
Let $(\phi _k)$ be a sequence of eigenfunctions, each $\phi_k$ corresponds to $\lambda _k$, so that $(\phi _k)$ form an orthonormal basis of $L^2(\Omega )$. 

\smallskip
A usual spectral decomposition yields
\[
\mathcal{S}_q(f)(\cdot ,\tau )=\sum_{\ell \geq 1}e^{-\lambda _k\tau}(f,\phi_\ell )_{L^2(\Omega )}\phi_\ell .
\]
Here $(\cdot ,\cdot )_{L^2(\Omega )}$ is the usual scalar product on $L^2(\Omega )$. In particular
\[
(f,\phi_\ell )_{L^2(\Omega )}^2\le e^{2\lambda _\ell \tau} \|\mathcal{S}_q(f)(\cdot ,\tau )\|_{L^2(\Omega )}^2,\;\; \ell \ge 1.
\]
Whence, for any integer $N\geq 1$,
\[
\sum_{\ell =1}^N(f,\phi_\ell )_{L^2(\Omega )}^2 \le Ne^{2\lambda _N\tau}\|\mathcal{S}_q(f)(\cdot ,\tau )\|_{L^2(\Omega )}^2.
\]
This and the fact that $\left( \sum_{\ell\ge 1}\lambda _\ell (\cdot ,\phi_\ell )_{L^2(\Omega )}^2\right)^{1/2}$ is an equivalent norm on $H_0^1(\Omega )$ lead
\begin{align*}
\|f\|_{L^2(\Omega )}^2 &= \sum_{\ell =1}^N(f,\phi_\ell )_{L^2(\Omega )}^2+\sum_{\ell \ge N+1}^N(f,\phi_\ell )_{L^2(\Omega )}^2
\\
&\le \sum_{\ell =1}^N(f,\phi_\ell )_{L^2(\Omega )}^2+\frac{1}{\lambda_{N+1}}\sum_{\ell \ge N+1}^N\lambda_\ell (f,\phi_\ell )_{L^2(\Omega )}^2
\\
& \le Ne^{2\lambda _N\tau}\|\mathcal{S}_q(f)(\cdot ,\tau )\|_{L^2(\Omega )}^2+ \frac{1}{\lambda_{N+1}}\|f\|_{H^1(\Omega )}.
\end{align*}
In light of \eqref{1.8}, this estimate gives
\begin{equation}\label{3.8}
\|f\|_{L^2(\Omega )}^2\leq Ne^{2\widetilde{c}\lambda _N^{2/n}\tau}\|\mathcal{S}_q(f)(\cdot ,\tau )\|_{L^2(\Omega )}^2+ \frac{\widetilde{c}}{(N+1)^{2/n}}\|f\|_{H^1(\Omega )}.
\end{equation}

Let $\epsilon \ge 1$ and $N\ge 1$ be the unique integer so that $N\le \epsilon^{n/2}<N+1$. We obtain in a straightforward manner from \eqref{3.8}

\[
\|f\|_{L^2(\Omega )}^2\leq e^{(2\widetilde{c}\tau +1)\epsilon}\|\mathcal{S}_q(f)(\cdot ,\tau )\|_{L^2(\Omega )}^2+ \frac{\widetilde{c}}{\epsilon}\|f\|_{H^1(\Omega )}^2.
\]
The proof is completed by using the elementary inequality $\sqrt{a+b}\leq \sqrt{a}+\sqrt{b}$, $a,b\ge 0$ and \eqref{3.7}.
\end{proof}

When $q\in L^\infty (\Omega )$ and $g\in H^1((0,\tau ))$ satisfying $g(0)\ne 0$ are fixed, we set $\mathscr{S}(f):=\mathscr{S}_q(f,g)$. In that case \eqref{3.4} takes the simple form, 
\[
\widetilde{C}\|f\|_{L^2(\Omega )}\leq \frac{1}{\sqrt{\epsilon}}\|f\|_{H_0^1(\Omega )}+e^{c\epsilon}\|\partial _\nu \mathscr{S}(f) \|_{H^1((0,\tau );L^2(\Upsilon))},\;\; \epsilon \ge 1,
\]
for any $f\in H_0^1(\Omega )$. Of course the constant $\widetilde{C}$ depends on $q$ and $g$.

\smallskip
The last estimate enables us to get a logarithmic stability estimate for the inverse source problem consisting in determining $f$ from $\partial _\nu \mathscr{S}(f){_{|\Lambda}}$.

\begin{corollary}\label{corollary3.1}
Fix $q\in L^\infty (\Omega )$, $g\in H^1((0,\tau ))$ satisfying $g(0)\ne 0$. There exists a constant $\widehat{C}=\widehat{C}(n,\Omega, q,g,\Upsilon ,m)>0$ so that, for any $f\in mB_{H_0^1(\Omega )}$,
\[
\widehat{C}\|f\|_{L^2(\Omega )}\le \Phi \left(\|\partial _\nu \mathscr{S}(f) \|_{H^1((0,\tau );L^2(\Upsilon))}\right),
\]
where $\Phi (\gamma )=\left| \ln \gamma \right|^{-1/2}+\gamma$, $\gamma >0$.
\end{corollary}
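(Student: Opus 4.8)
The plan is to run the standard optimization-in-$\epsilon$ argument on the simplified interpolation inequality displayed just above the statement. Writing $\gamma=\|\partial_\nu\mathscr{S}(f)\|_{H^1((0,\tau);L^2(\Upsilon))}$ and inserting the a priori bound $\|f\|_{H_0^1(\Omega)}\le m$ into that inequality, I would first obtain
\[
\widetilde{C}\|f\|_{L^2(\Omega)}\le \frac{m}{\sqrt{\epsilon}}+e^{c\epsilon}\gamma,\qquad \epsilon\ge 1.
\]
The two terms on the right behave oppositely as $\epsilon$ grows, so the idea is to choose $\epsilon$ balancing the decaying algebraic term against the growing exponential one, subject to the constraint $\epsilon\ge 1$.

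Next I would split into two regimes according to a threshold $\gamma^\ast$. In the informative regime $\gamma\le\gamma^\ast:=e^{-2c}$, the choice $\epsilon=\tfrac{1}{2c}|\ln\gamma|$ is admissible (it satisfies $\epsilon\ge1$) and produces $e^{c\epsilon}=\gamma^{-1/2}$, whence $e^{c\epsilon}\gamma=\sqrt{\gamma}$ and $\tfrac{m}{\sqrt{\epsilon}}=m\sqrt{2c}\,|\ln\gamma|^{-1/2}$. Since the map $\gamma\mapsto \sqrt{\gamma}\,|\ln\gamma|^{1/2}$ stays bounded on $(0,\gamma^\ast]$, the term $\sqrt{\gamma}$ is dominated by a constant multiple of $|\ln\gamma|^{-1/2}$, and I conclude $\widehat{C}\|f\|_{L^2(\Omega)}\le |\ln\gamma|^{-1/2}$ on this range. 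For the complementary regime $\gamma\ge\gamma^\ast$, I would simply invoke Poincar\'e's inequality together with the a priori bound, $\|f\|_{L^2(\Omega)}\le C\|f\|_{H_0^1(\Omega)}\le Cm$, and then absorb the constant into the linear term via $Cm\le Cm\,\gamma/\gamma^\ast$, yielding $\widehat{C}\|f\|_{L^2(\Omega)}\le \gamma$. Combining the two cases (after renaming constants so that a single $\widehat{C}$ works in both) and recalling $\Phi(\gamma)=|\ln\gamma|^{-1/2}+\gamma$ gives the claimed estimate.

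The dependence of $\widehat{C}$ on $q$ and $g$ is inherited from $\widetilde{C}$ and from the prefactor $\tfrac{1}{|g(0)|}e^{\tau\|g'\|_{L^2((0,\tau))}^2/|g(0)|^2}$ that was fixed once $q$ and $g$ were fixed. The argument is essentially routine; the only point requiring care is the interplay between the admissibility constraint $\epsilon\ge1$ and the balancing choice of $\epsilon$, which is precisely what forces the two-regime decomposition and the appearance of the additive $+\gamma$ term in $\Phi$. This mirrors the passage from \eqref{1.11} to \eqref{1.12}--\eqref{1.13} in the proof of Theorem \ref{theorem1.1}, the difference being that here the amplification factor is $e^{c\epsilon}$ (linear in $\epsilon$) rather than an exponential composed with a polynomial substitution in $s$, which is why the resulting modulus of continuity is $|\ln\gamma|^{-1/2}$ instead of $|\ln\gamma|^{-1/(8+2n)}$.
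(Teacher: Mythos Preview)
Your proposal is correct and follows exactly the approach the paper intends: the corollary is stated without proof precisely because it is the routine ``minimization argument in $\epsilon$'' alluded to in Remark~\ref{remark1.1}, and your two-regime decomposition mirrors the passage \eqref{1.11}--\eqref{1.13} in the proof of Theorem~\ref{theorem1.1}. The choice $\epsilon=\tfrac{1}{2c}|\ln\gamma|$, the bound on $\sqrt{\gamma}\,|\ln\gamma|^{1/2}$ over $(0,\gamma^\ast]$, and the trivial estimate via Poincar\'e for $\gamma\ge\gamma^\ast$ are all exactly what the paper has in mind.
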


Next, we consider the IBVP
\begin{equation}\label{3.9}
\left\{
\begin{array}{lll}
 \partial _t u - \Delta u + q(x)u  = 0 \;\; &\mbox{in}\;   Q, 
 \\
u = 0 &\mbox{on}\;  \Sigma, 
\\
u(\cdot ,0) = u_0.
\end{array}
\right.
\end{equation}
To any $q\in L^\infty (\Omega )$ and $u_0\in H_0^1(\Omega )$ corresponds a unique solution $u:=\mathbf{S}_q(u_0)\in H^{2,1}(Q)$ and
\begin{equation}\label{3.10}
\|\mathbf{S}_q(u_0)\|_{H^{2,1}(Q)}\le C' \|u_0\|_{H_0^1(\Omega )},
\end{equation}
uniformly in $q\in mB_{L^\infty (\Omega )}$, where $C'$ is the same constant as in \eqref{3.2}.

\smallskip
Let $\mathcal{H}_0=\{w\in H_0^1(\Omega );\; \Delta w\in H_0^1(\Omega )\}$ that we equip with its natural norm
\[
\|u\|_{\mathcal{H}_0}=\| u\|_{H_0^1(\Omega )}+\|\Delta u\|_{H_0^1(\Omega )}.
\]

When $q\in W^{1,\infty}(\Omega )$ and $u_0\in \mathcal{H}_0$ then it is straightforward to check that $\partial _t\mathbf{S}_q(u_0)=\mathbf{S}_q(\Delta u_0+qu_0)$. So applying \eqref{3.10}, with $u_0$ substituted by $\Delta u_0-qu_0$, we get
\begin{equation}\label{3.11}
\|\partial _t\mathbf{S}_q(u_0)\|_{H^{2,1}(Q)}\le C' \|u_0\|_{\mathcal{H}_0},
\end{equation}
uniformly in $q\in mB_{W^{1,\infty}(\Omega )}$.

\smallskip
Bearing in mind that the trace operator $w\in H^{2,1}(Q)\mapsto \partial _\nu w\in L^2(\Lambda )$ is bounded, we obtain that $\partial _\nu \mathbf{S}_q(u_0)\in H^1((0,\tau );L^2(\Upsilon ))$ if $u_0\in \mathcal{H}_0$ and $q\in W^{1,\infty}(\Omega )$, and using \eqref{3.10} and \eqref{3.11}, we get
\[
\|\partial _\nu \mathbf{S}_q(u_0)\|_{H^1((0,\tau );L^2(\Upsilon ))}\le C_0 \|u_0\|_{\mathcal{H}_0},
\]
uniformly in $q\in mB_{W^{1,\infty}(\Omega )}$, for some constant $C_0=C_0(n,\Omega ,\tau ,m)$.

\smallskip
In other words, we proved that the operator $\mathcal{N}_q:u_0\in \mathcal{H}_0\mapsto \partial _\nu \mathbf{S}_q(u_0)\in H^1((0,\tau );L^2(\Upsilon ))$ is bounded and
\[
\|\mathcal{N}_q\|_{\mathscr{B}(\mathcal{H}_0,H^1((0,\tau );L^2(\Upsilon )))}\le C_0 ,
\]
uniformly in $q\in mB_{W^{1,\infty}(\Omega )}$.

\smallskip
From here on, for sake of simplicity, the norm of $\mathcal{N}_q-\mathcal{N}_{q_0}$ in $\mathscr{B}(\mathcal{H}_0;H^1((0,\tau );L^2(\Upsilon))$ is simply denoted by $\|\mathcal{N}_q-\mathcal{N}_{q_0}\|$.

\begin{theorem}
There exists a constant $C>0$ so that, for any $q_0,\, q\in mB_{W^{1,\infty}(\Omega )}$,
\[
C\|q-q_0\|_{L^2(\Omega )}\le \Theta \left(\|\mathcal{N}_q-\mathcal{N}_{q_0}\|\right).
\]
Here $\Theta (\gamma )=\left| \ln \gamma \right|^{-\frac{1}{1+4n}}+\gamma $.
\end{theorem}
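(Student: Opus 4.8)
The plan is to reproduce, for the parabolic operator $\mathcal N_q$, the scheme used for Theorem~\ref{theorem1.1}, with the interpolation inequality \eqref{3.4} now playing the role that Theorem~\ref{theorem1.2} played for the wave equation. First I would reduce to nonnegative potentials: replacing $q,q_0$ by $q+m,q_0+m$ leaves $q-q_0$ unchanged and replaces $\mathbf S_q(u_0)$, $\mathbf S_{q_0}(u_0)$ by $e^{-mt}\mathbf S_q(u_0)$, $e^{-mt}\mathbf S_{q_0}(u_0)$, so $\mathcal N_q-\mathcal N_{q_0}$ is merely composed with the isomorphism $h\mapsto e^{-mt}h$ of $H^1((0,\tau);L^2(\Upsilon))$ and $\|\mathcal N_q-\mathcal N_{q_0}\|$ changes only by a fixed factor. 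I may thus assume $q,q_0\ge 0$. Let $(\lambda_k,\phi_k)$ be the eigenpairs of $A_0=-\Delta+q_0$ on $H_0^1(\Omega)\cap H^2(\Omega)$, normalised in $L^2(\Omega)$, so that $\mathbf S_{q_0}(\phi_k)=e^{-\lambda_k t}\phi_k$. The crucial identity is that $w:=\mathbf S_q(\phi_k)-\mathbf S_{q_0}(\phi_k)$ solves \eqref{3.1} with potential $q$, source $-(q-q_0)e^{-\lambda_k t}\phi_k$ and zero initial datum; that is, $w=\mathscr S_q(f_k,g_k)$ with $f_k=-(q-q_0)\phi_k$ and $g_k(t)=e^{-\lambda_k t}$, whence $\partial_\nu w=\mathcal N_q(\phi_k)-\mathcal N_{q_0}(\phi_k)$ on $\Lambda$.

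Next I would insert $f=f_k$, $g=g_k$ into \eqref{3.4}. Since $g_k(0)=1$ and $\|g_k'\|_{L^2((0,\tau))}^2=\tfrac12\lambda_k(1-e^{-2\lambda_k\tau})\le \lambda_k/2$, the exponential prefactor is at most $e^{\tau\lambda_k/2}$. Two norms must be controlled. From $q-q_0\in W^{1,\infty}(\Omega)$ and $\|\nabla\phi_k\|_{L^2(\Omega)}^2\le\lambda_k$ (a consequence of $q_0\ge 0$) one gets $\|(q-q_0)\phi_k\|_{H_0^1(\Omega)}\le C\sqrt{\lambda_k}$, and from $\Delta\phi_k=(q_0-\lambda_k)\phi_k$ together with $q_0\in W^{1,\infty}(\Omega)$ one gets $\|\phi_k\|_{\mathcal H_0}\le C\lambda_k^{3/2}$, so that $\|\mathcal N_q(\phi_k)-\mathcal N_{q_0}(\phi_k)\|_{H^1((0,\tau);L^2(\Upsilon))}\le C\lambda_k^{3/2}\gamma$, where $\gamma:=\|\mathcal N_q-\mathcal N_{q_0}\|$. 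Combining these with Cauchy--Schwarz, $|(q-q_0,\phi_k)_{L^2(\Omega)}|\le|\Omega|^{1/2}\|(q-q_0)\phi_k\|_{L^2(\Omega)}$, and squaring the resulting additive inequality yields the per-mode bound
\[
C\,(q-q_0,\phi_k)_{L^2(\Omega)}^2\le \frac{\lambda_k}{\epsilon}+\lambda_k^{3}e^{\tau\lambda_k}e^{2c\epsilon}\gamma^2,\qquad \epsilon\ge 1.
\]

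I would then sum over the modes. Writing $\|q-q_0\|_{L^2(\Omega)}^2=\sum_{k\le N}+\sum_{k>N}$, the tail is estimated as in the proof of Theorem~\ref{theorem1.1}: since $\bigl(\sum_k\lambda_k(q-q_0,\phi_k)_{L^2(\Omega)}^2\bigr)^{1/2}$ is equivalent to the $H^1(\Omega)$-norm and $\|q-q_0\|_{H^1(\Omega)}\le C$, the lower bound in \eqref{1.8} gives $\sum_{k>N}(q-q_0,\phi_k)_{L^2(\Omega)}^2\le C(N+1)^{-2/n}$. In the head I use the upper bound in \eqref{1.8} to pass from $\lambda_k$ to $k^{2/n}$ and absorb all polynomial factors into the exponentials, arriving, with $N=\lfloor s\rfloor$, at
\[
C\|q-q_0\|_{L^2(\Omega)}^2\le \frac{s^{1+2/n}}{\epsilon}+\frac{1}{s^{2/n}}+e^{\theta s^{2/n}}e^{2c\epsilon}\gamma^2,\qquad s\ge 1.
\]
Choosing $\epsilon$ as a suitable power of $s$ so as to balance the first term against the tail, and then, for $\gamma$ below a threshold $\gamma^\ast$, selecting $s=s(\gamma)$ by equating the exponential term with $s^{-2/n}$ exactly as in the passage from \eqref{1.11} to \eqref{1.12}, produces the logarithmic modulus; the complementary range $\gamma\ge\gamma^\ast$ is covered by the trivial bound $\|q-q_0\|_{L^2(\Omega)}\le 2m|\Omega|^{1/2}$. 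Collecting the two regimes gives the estimate with $\Theta$.

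The delicate point is this final two-parameter optimisation. In the hyperbolic case the multiplicative interpolation $\|h\|_{L^2(\Omega)}\le c\|h\|_{H_0^1(\Omega)}^{1/2}\|h\|_{H^{-1}(\Omega)}^{1/2}$ lets the data enter linearly and with the mild factor $\epsilon^{-1/2}$; here \eqref{3.4} controls $\|f\|_{L^2(\Omega)}$ directly, so squaring is unavoidable and one must simultaneously tame the exponential growth $e^{\tau\lambda_k}$ carried by the profile $g_k$ (equivalently $e^{\theta s^{2/n}}$ after summation) and the factor $e^{2c\epsilon}$, against the polynomial tail decay $(N+1)^{-2/n}$. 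Tracking these competing powers of $s$ through the optimisation is what fixes the exponent in $\Theta$; everything else is a direct transcription of the proof of Theorem~\ref{theorem1.1}.
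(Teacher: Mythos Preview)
Your approach is essentially identical to the paper's: reduce to $q_0\ge 0$, linearise through the eigenfunctions $\phi_k$ of $A_0$, recognise $\mathbf S_q(\phi_k)-\mathbf S_{q_0}(\phi_k)$ as $\mathscr S_q((q_0-q)\phi_k,e^{-\lambda_k t})$, apply the interpolation inequality \eqref{3.4} with the bounds $\|(q-q_0)\phi_k\|_{H_0^1}\le C\sqrt{\lambda_k}$ and $\|\phi_k\|_{\mathcal H_0}\le C\lambda_k^{3/2}$, sum the squared per-mode estimates against the $H^1$ tail controlled via \eqref{1.8}, and optimise in $(s,\epsilon)$. Your computation $\|g_k'\|_{L^2((0,\tau))}^2\le\lambda_k/2$ is in fact sharper than the paper's crude $e^{\tau\lambda_k^2}$ in \eqref{3.12}, but since after choosing $\epsilon=s^{1+4/n}$ the factor $e^{2c\epsilon}$ dominates $e^{\theta s^{2/n}}$, this refinement does not alter the final modulus, and your argument yields the stated $\Theta$.
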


\begin{proof} 
Let $q_0,\, q\in mB_{W^{1,\infty}(\Omega )}$. As before, without loss of generality, we assume that $q_0\ge 0$. 

\smallskip
Let $A_0:L^2(\Omega )\rightarrow L2(\Omega)$ be the unbounded operator given by $A_0=-\Delta +q_0$ and $D(A_0)=H_0^1(\Omega )\cap H^2(\Omega )$. Denote by $0 <\lambda _1\le \lambda _2 \le  \ldots \le \lambda _k \ldots \rightarrow +\infty$ the sequence of eigenvalues of  the operator $A_0$, and $(\phi _k)$ a sequence of the corresponding eigenfunctions so that $(\phi _k)$ form an orthonormal basis of $L^2(\Omega )$. 

\smallskip
Taking into account that $\mathbf{S}_{q_0}(\phi _k)=e^{-\lambda _kt}\phi _k$, we obtain
\[
\mathbf{S}_q(\phi_k)-\mathbf{S}_{q_0}(\phi_k)=\mathscr{S}_q((q-q_0)\phi _k,e^{-\lambda_kt}).
\]
Therefore
\[
\mathcal{N}_q(\phi_k)-\mathcal{N}_{q_0}(\phi_k)=\partial _\nu \mathscr{S}_q((q-q_0)\phi _k,e^{-\lambda_kt}).
\]
Hence, a similar argument as in the preceding section yields
\[
\| \partial _\nu \mathscr{S}_q((q-q_0)\phi _k,e^{-\lambda_kt})\|_{H^1((0,\tau );L^2(\Upsilon ))}\le C\lambda_k^{3/2}  \|\mathcal{N}_q-\mathcal{N}_{q_0}\|
\]
which, in combination with estimate \eqref{3.4}, implies, with $\epsilon \geq 1$ is arbitrary,
\begin{equation}\label{3.12}
C|\Omega |^{-1/2}|(q-q_0,\phi_k)_{L^2(\Omega )}\le C\|(q-q_0)\phi_k\|_{L^2(\Omega )}\le\frac{\sqrt{\lambda_k}}{\sqrt{\epsilon}}+e^{\tau \lambda _k^2}e^{c\epsilon}\lambda _k^2 \|\mathcal{N}_q-\mathcal{N}_{q_0}\|,
\end{equation}
where we used the estimate $\|(q-q_0)\phi_k\|_{H_0^1(\Omega )}\leq C\sqrt{\lambda _k}$.

\smallskip
A straightforward consequence of estimate \eqref{3.12} is
\begin{equation}\label{3.13}
C\sum_{k=1}^N |(q-q_0,\phi_k)_{L^2(\Omega )}^2\le \frac{N\lambda _N}{\epsilon}+ Ne^{(2\tau +1)\lambda _N^2}e^{c\epsilon}\|\mathcal{N}_q-\mathcal{N}_{q_0}\|^2,
\end{equation}
for any arbitrary integer $N\ge 1$.

We pursue similarly to the proof of Theorem \ref{theorem3.1} in order to get, for arbitrary $s\geq 1$,
\[
C\|q-q_0\|_{L^2(\Omega )}^2\le \frac{s^{1+2/n}}{\epsilon}+\frac{1}{s^{2/n}}+e^{\varrho s^{1+4/n}}e^{c\epsilon}\|\mathcal{N}_q-\mathcal{N}_{q_0}\|^2.
\]
The proof is then completed in the same manner to that of Theorem \ref{theorem3.1}. 
\end{proof}


\end{document}